\documentclass[11pt, a4paper, fleqn, final]{article}
\usepackage[utf8]{inputenc}
\usepackage[T1]{fontenc}
\usepackage[english]{babel}
\usepackage{amsmath, amsthm, amssymb, mathtools, dsfont}
\usepackage{csquotes}
\usepackage[backend=bibtex,sortcites=true,isbn=false,maxnames=5,mincrossrefs=1,parentracker=true,sorting=nyt]{biblatex}
\usepackage{hyperref, url}
\usepackage{graphicx}
\usepackage{xcolor}
\usepackage{fancyhdr}
\usepackage{pdfpages}
\usepackage{libertine}

\sloppy

\newtheorem{theo}{Theorem}
\newtheorem{coro}[theo]{Corollary}
\newtheorem{prop}[theo]{Proposition}
\newtheorem{lemm}[theo]{Lemma}
\newtheorem{conj}[theo]{Conjecture}

\theoremstyle{definition}
\newtheorem{defi}[theo]{Definition}
\newtheorem{rema}[theo]{Remark}

\newcommand{\eqspace}{\ensuremath{\mathrel{\phantom{=}}}}
\DeclarePairedDelimiter\abs{\lvert}{\rvert}
\DeclareMathOperator{\sgn}{sgn}

\makeatletter
\def\moverlay{\mathpalette\mov@rlay}
\def\mov@rlay#1#2{\leavevmode\vtop{%
   \baselineskip\z@skip \lineskiplimit-\maxdimen
   \ialign{\hfil$\m@th#1##$\hfil\cr#2\crcr}}}
\newcommand{\charfusion}[3][\mathord]{
    #1{\ifx#1\mathop\vphantom{#2}\fi
        \mathpalette\mov@rlay{#2\cr#3}
      }
    \ifx#1\mathop\expandafter\displaylimits\fi}
\makeatother
\newcommand{\cdotcup}{\charfusion[\mathbin]{\cup}{\cdot}}

\title{The Merrifield-Simmons conjecture \\ also holds for parity graphs}
\author{Martin Trinks\thanks{Supported by NSFC grant No. 11371205.} \\
\small Center for Combinatorics \\[-0.8ex]
\small Nankai University \\[-0.8ex]
\small Tianjin, China \\
\small \tt martin.trinks@googlemail.com
}
\date{}

\begin{document}

\maketitle

\begin{abstract}
The Merrifield-Simmons conjectures states a relation between the distance of vertices in a simple graph $G$ and the number of independent sets, denoted as $\sigma(G)$, in vertex-deleted subgraphs. Namely, that the sign of the term $\sigma(G_{-u}) \cdot \sigma(G_{-v}) - \sigma(G) \cdot \sigma(G_{-u-v})$ only depends on the parity of the distance of $u$ and $v$ in $G$. We prove this statement in the case of parity graphs and give some evidence that this result may not be further generalized to other classes of graphs.
\end{abstract}

\section{Introduction}

Let $G = (V, E)$ be a simple graph and $\sigma(G)$ the number of independent (vertex) sets of $G$, that is the number of vertex subsets $W \subseteq V$ such that no two vertices of $W$ are adjacent \cite{merrifield1980, merrifield1989}. In chemistry this number is also known as Merrifield-Simmons index. For two vertices $u, v \in V$, the term $\Delta(G, u, v)$ is defined as
\begin{align}
\Delta(G, u, v) = \sigma(G_{-u}) \cdot \sigma(G_{-v}) - \sigma(G) \cdot \sigma(G_{-u-v}),
\end{align}
where $G_{-w}$ is the graph with the vertex $w$ and its incident edges removed.
The \emph{Merrifield-Simmons conjecture} (MSC) states that $\sgn(\Delta(G, u, v))$, the sign of $\Delta(G, u, v)$, only depends on the distance between the vertices $u$ and $v$ in $G$, denoted by $d(G, u, v)$.

\begin{conj}[Merrifield-Simmons conjecture]
\label{conj:msc}
Let $G = (V, E)$ be a simple (bipartite) graph and $u, v \in V$ two vertices. Then
\begin{align}
\sgn(\Delta(G, u, v)) &= (-1)^{d(G, u, v) + 1}. \label{eq:conj_msc}
\end{align}
\end{conj}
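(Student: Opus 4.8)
The plan is to combine elementary deletion identities with the split‑decomposition structure of parity graphs, ultimately reducing to the bipartite case. Throughout, write $N[w]$ for the closed neighbourhood of $w$ and $G_{-N[w]}$ for $G$ with all of $N[w]$ deleted, and recall $\sigma(G)=\sigma(G_{-w})+\sigma(G_{-N[w]})$ together with multiplicativity of $\sigma$ over connected components. The base of the distance is free: for $u\sim v$, splitting the independent sets of $G_{-u}$ (resp.\ $G_{-v}$) according to whether they contain $v$ (resp.\ $u$) gives $\sigma(G_{-u})=\sigma(G_{-u-v})+\sigma(G_{-N[v]})$ and $\sigma(G_{-v})=\sigma(G_{-u-v})+\sigma(G_{-N[u]})$, so that $\Delta(G,u,v)=\sigma(G_{-N[u]})\cdot\sigma(G_{-N[v]})\ge 1$; since $(-1)^{d(G,u,v)+1}=1$ for $d=1$, this settles the adjacent case, and for every graph. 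The same bookkeeping for non‑adjacent $u,v$ yields $\Delta(G,u,v)=\sigma(G_{-u})\cdot\sigma(G_{-N[u]-v})-\sigma(G_{-N[u]})\cdot\sigma(G_{-u-v})$. It is also worth recording that if $\mu$ denotes the uniform measure on the independent sets of $G$, then $\Delta(G,u,v)=-\sigma(G)^2\cdot\operatorname{Cov}_\mu\!\bigl(\mathds{1}[u\in S],\mathds{1}[v\in S]\bigr)$, so the assertion is exactly that vertices at even (resp.\ odd) distance are strictly positively (resp.\ negatively) correlated in the hard‑core model at fugacity one.

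The parity‑graph hypothesis enters in precisely one place. Parity graphs are hereditary, and a shortest path is always induced, so in a parity graph every induced $u$--$v$ path has length $\equiv d(G,u,v)\pmod 2$; hence for every induced subgraph $H$ with $u,v$ in a common component one has $d(H,u,v)\equiv d(G,u,v)\pmod 2$, while if $u,v$ are separated in $H$ then $\sigma$ factorises and $\Delta(H,u,v)=0$. Consequently, once the statement is known for all parity graphs with fewer vertices than $G$, every $\Delta$ of a proper induced subgraph of $G$ appearing below automatically has sign in $\{0,(-1)^{d(G,u,v)+1}\}$ --- never the wrong one.

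Now induct on $\abs{V(G)}$ (we may assume $G$ connected and $u\neq v$). For the base, $\abs{V(G)}\le 3$ (in particular $K_3$) is checked directly, and the bipartite case is settled as follows: writing $V=X\cdotcup Y$ and replacing the indicator $\mathds{1}[v\in S]$ by $\mathds{1}[v\notin S]$ for $v\in Y$, the family of independent sets becomes a distributive sublattice of $\{0,1\}^V$ on which the uniform measure is log‑supermodular, so the FKG inequality yields the required sign of $\operatorname{Cov}_\mu(\mathds{1}[u\in S],\mathds{1}[v\in S])$ according as $u,v$ lie on the same side ($d$ even) or opposite sides ($d$ odd); strictness for connected $G$ follows from the strict form of FKG, the lattice having bottom $Y$ and top $X$. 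For the step, assume in addition that $G$ is non‑bipartite with $\abs{V(G)}\ge 4$; by the structure theorem for parity graphs (Burlet--Uhry) $G$ then has a split $V=V_1\cdotcup V_2$, so $G$ is the composition of two strictly smaller parity graphs $G_1,G_2$, each obtained from $G[V_1]$ resp.\ $G[V_2]$ by adjoining a marker vertex $h_1$ resp.\ $h_2$ joined to the split side $A_1$ resp.\ $A_2$. If $u\in V_1$ and $v\in V_2$ the step closes cleanly: one computes $\Delta(G,u,v)=\Delta(G_1,u,h_1)\cdot\Delta(G_2,v,h_2)$, and since $d_{G_1}(u,h_1)+d_{G_2}(v,h_2)=d(G,u,v)+1$, the inductive hypothesis applied to $G_1$ and $G_2$ gives the sign. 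If $u,v\in V_1$, then in the composition formula $\sigma(G)=\sigma(G[V_1])\sigma(G[V_2\setminus A_2])+\sigma(G[V_1\setminus A_1])\sigma(G[V_2])-\sigma(G[V_1\setminus A_1])\sigma(G[V_2\setminus A_2])$ only the $G[V_1]$‑factors vary, and collecting terms yields
\[
\Delta(G,u,v)=\Delta(G[V_1],u,v)\,b^2+\bigl(\Delta(G_1,u,v)-\Delta(G[V_1],u,v)-\Delta(G[V_1\setminus A_1],u,v)\bigr)bc+\Delta(G[V_1\setminus A_1],u,v)\,c^2,
\]
where $b=\sigma(G[V_2\setminus A_2])\ge 1$, $c=\sigma(G[V_2])-\sigma(G[V_2\setminus A_2])\ge 1$, and $\Delta(G[V_1\setminus A_1],u,v)=0$ when $u$ or $v$ lies in $A_1$; here $G_1$, $G[V_1]$ and $G[V_1\setminus A_1]$ are smaller parity graphs with $u,v$ at the correct distance parity.

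The crux --- where I expect the real work --- is the cross‑term coefficient in the last display: one wants
\[
\abs{\Delta(G_1,u,v)}\ \ge\ \abs{\Delta(G[V_1],u,v)}+\abs{\Delta(G[V_1\setminus A_1],u,v)},
\]
after which, the three $\Delta$'s sharing the sign $(-1)^{d(G,u,v)+1}$ and $\abs{\Delta(G_1,u,v)}>0$ by induction, one reads off $\sgn\Delta(G,u,v)=(-1)^{d(G,u,v)+1}$. Writing $q_w^H=\mu_H(w\notin S)$ and $\gamma_H=\operatorname{Cov}_{\mu_H}(\mathds{1}[u\in S],\mathds{1}[v\in S])$, the cross term equals $\sigma(G[V_1])\sigma(G[V_1\setminus A_1])$ times $-\bigl(q_u^{G[V_1]}-q_u^{G[V_1\setminus A_1]}\bigr)\bigl(q_v^{G[V_1]}-q_v^{G[V_1\setminus A_1]}\bigr)-\gamma_{G[V_1]}-\gamma_{G[V_1\setminus A_1]}$; the two covariance terms already have the right sign by induction, so what remains is to show that the product of the two ``shifts'' $q_w^{G[V_1]}-q_w^{G[V_1\setminus A_1]}$ has sign $(-1)^{d(G,u,v)}$ --- a correlation‑alternation statement to the effect that deleting $A_1$ moves $\mu(u\in S)$ and $\mu(v\in S)$ in the same direction exactly when $d(u,v)$ is even. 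I would attack this by strengthening the induction hypothesis to carry such an alternation statement along; it is here that the parity‑graph structure is genuinely used, which fits the paper's later remark that the result should not be expected to generalise much further. The remaining items --- upgrading the FKG inequalities to strict ones, the degenerate splits, and the positions $u$ or $v$ in a marker set --- are routine bookkeeping.
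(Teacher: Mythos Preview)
Your proposal is incomplete at exactly the point you flag as the ``crux'': you do not prove the cross-term inequality
\[
\abs{\Delta(G_1,u,v)}\ \ge\ \abs{\Delta(G[V_1],u,v)}+\abs{\Delta(G[V_1\setminus A_1],u,v)},
\]
nor the equivalent ``shift'' statement that $\bigl(q_u^{G[V_1]}-q_u^{G[V_1\setminus A_1]}\bigr)\bigl(q_v^{G[V_1]}-q_v^{G[V_1\setminus A_1]}\bigr)$ has sign $(-1)^{d(G,u,v)}$. You say you would ``strengthen the induction hypothesis to carry such an alternation statement along'', but notice what that strengthened statement actually is: the numerator of $q_w^{G[V_1]}-q_w^{G[V_1\setminus A_1]}$ equals $\sigma(G[V_1]_{-w})\sigma(G[V_1]_{-A_1})-\sigma(G[V_1])\sigma(G[V_1]_{-w-A_1})=\Delta(G[V_1],\{w\},A_1)$, a $\Delta$ with a \emph{vertex set} in the second slot. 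So the inductive load you need is precisely the sign of $\Delta(H,\{w\},A)$ for a vertex subset $A$, governed by the parity of induced $\{w\}$--$A$ paths. That is the paper's Theorem~\ref{theo:gmsc_parity}. In other words, your split-decomposition route, once you try to close it, forces you to discover the paper's generalisation to subsets anyway.

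The paper sidesteps the structure theorem and FKG entirely. It works directly with $\Delta(G,A,B)$ for vertex subsets $A,B$ and proves the single recurrence $\Delta(G,A,B)=-\sum_{\text{ind.\ }W\subseteq A}\Delta(G_{-A},N_G(W),B)$ (Lemma~\ref{lemm:delta_neighborhood}), together with the observation that passing from $A$ to $N_G(W)$ in $G_{-A}$ flips the parity of all induced paths to $B$ (Lemma~\ref{lemm:gmsc_prop}). Induction on $\abs{V}$ then gives the sign of $\Delta(G,A,B)$ for \emph{arbitrary} simple $G$ whenever all induced $A$--$B$ paths share a parity; the parity-graph hypothesis is only invoked at the very end to guarantee this for $A=\{u\}$, $B=\{v\}$. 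This is both shorter and strictly more general than what your outline would yield: it needs no Burlet--Uhry decomposition, no FKG, and it proves the statement for any $G$ and any $A,B$ with the induced-path-parity property, not just for parity graphs with singleton $A,B$. Your FKG argument for the bipartite base case is correct and elegant, but it does not extend, and the split-decomposition step above it is where the real content lives --- content you have not supplied.
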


Merrifield and Simmons \cite[page 144]{merrifield1989} noted the statement above as a property (without proof), but did not mention the class of graphs they were considering. \textcite{gutman1990} mentioned some counterexamples for arbitrary simple graphs and explicitly restated the conjecture for bipartite graphs. He also confirmed the statement for trees \cite{gutman1991}. The present author proved the MSC in the case of bipartite graphs \cite{trinks2013}. For more previous results see \cite{gutman1990, gutman1991, li1996, trinks2011, trinks2013, wang2001}.

This paper aims to show in which graphs classes the conjecture holds. To prove the Merrifield-Simmons conjecture (MSC) for parity graphs we go along nearly the same line of arguments as in the bipartite case, but in some clarified and generalized version. Thus, in Section \ref{sec:gmsc} we introduce generalizations of the terms used in the MSC to vertex subsets and some properties of them, on which the main theorem given in Section \ref{sec:parity_graphs} is based. In Section \ref{sec:counterexamples} we conclude by presenting counterexamples which give some evidence that the result cannot be further generalized. In the reminder of this section we provide the necessary notation for graphs and the applied properties for the number of independent sets.

For a simple graph $G = (V, E)$ with a vertex $v \in V$ and a vertex subset $W \subseteq V$ we use the following notations: $G_{-W}$ denotes the graph $G$ where all vertices $v \in W$ are deleted, that is these vertices and their incident edges are removed. The open neighborhood of $W$ is denoted by $N_{G}(W)$, that is the set of all vertices adjacent to a vertex $v \in W$. If $W = \{v\}$ then we write $G_{-v}$ and $N_{G}(v)$ instead of $G_{-\{v\}}$ and $N_{G}(\{v\})$, respectively. $G_1 \cdotcup G_2$ is the disjoint union of the graphs $G_1$ and $G_2$, that is the union of disjoint copies of both graphs. 

For the number of independent sets $\sigma(G)$ we use the following basic properties: First, it is multiplicative in components, that is 
\begin{align}
\sigma(G_1 \cdotcup G_2) = \sigma(G_1) \cdot \sigma(G_2). \label{eq:sigma_prop_4}
\end{align}
Second, it satisfies for each vertex $v \in V$ the recurrence relation
\begin{align}
\sigma(G) &= \sigma(G_{-v}) + \sigma(G_{-v-N_G(v)}). \label{eq:sigma_prop_1}
\end{align}
Finally, this recurrence relation can be generalized to vertex subsets:
\begin{theo}[Theorem 3.7 in \cite{hoede1994}]
\label{theo:sigma_prop_5}
Let $G = (V, E)$ be a simple graph and $U \subseteq V$ a vertex subset. Then
\begin{align}
& \sigma(G) = \eqspace \sum_{\mathclap{\substack{W \subseteq U \\ W \text{ is independent}}}}{\sigma(G_{-U-N_{G}(W)})}. \label{eq:theo_sigma_prop_5}
\end{align}
\end{theo}
\section{A generalization for vertex subsets}
\label{sec:gmsc}

In the following, a generalization of $\Delta(G, u, v)$ is considered where vertex subsets instead of vertices are deleted.

\begin{defi}
Let $G = (V, E)$ be a simple graph and $A, B \subseteq V$ two vertex subsets. Then $\Delta(G, A, B)$ is defined as 
\begin{align}
\Delta(G, A, B) = \sigma(G_{-A}) \cdot \sigma(G_{-B}) - \sigma(G) \cdot \sigma(G_{-A-B}).
\end{align}
\end{defi}

This generalization has the advantage that a recurrence relation for $\Delta(G, A, B)$ can be derived which enables us to state the term for $G$ as a sum over terms for proper subgraphs of $G$. In fact, in the case of bipartite graphs \cite{trinks2013} this recurrence relation (and Proposition \ref{prop:delta_non-disjoint} as well) are ``hidden'' in the proof, here we state them explicitly.

\begin{lemm}
\label{lemm:delta_neighborhood}
Let $G = (V, E)$ be a simple graph and $A, B \subseteq V$ two disjoint vertex subsets. Then
\begin{align}
\Delta(G, A, B) &= - \sum_{\mathclap{\substack{W \subseteq A \\ W \text{ is independent}}}}{\Delta(G_{-A}, N_{G}(W), B)}.
\end{align}
\end{lemm}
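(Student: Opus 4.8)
The plan is to expand both $\sigma(G)$ and $\sigma(G_{-B})$ by Theorem~\ref{theo:sigma_prop_5} applied with $U = A$, substitute these expansions into the definition of $\Delta(G, A, B)$, and reorganise the resulting double sum into a single sum of $\Delta$-terms living in the graph $G_{-A}$. Applying Theorem~\ref{theo:sigma_prop_5} to $G$ with $U = A$ gives $\sigma(G) = \sum_{W} \sigma(G_{-A-N_{G}(W)})$, the sum ranging over independent $W \subseteq A$. Applying the same theorem with $U = A$ to the graph $G_{-B}$ is legitimate because $A \cap B = \emptyset$, hence $A \subseteq V(G_{-B})$; here two small observations are needed, namely that a set $W \subseteq A$ is independent in $G_{-B}$ if and only if it is independent in $G$ (deleting $B$ does not change the subgraph induced on $A$), and that $N_{G_{-B}}(W) = N_{G}(W) \setminus B$, so that $(G_{-B})_{-A-N_{G_{-B}}(W)} = G_{-A-B-N_{G}(W)}$. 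This yields $\sigma(G_{-B}) = \sum_{W} \sigma(G_{-A-B-N_{G}(W)})$, again over independent $W \subseteq A$.

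Next I would substitute both expansions into $\Delta(G, A, B) = \sigma(G_{-A}) \cdot \sigma(G_{-B}) - \sigma(G) \cdot \sigma(G_{-A-B})$. Abbreviating $f(W) = \sigma(G_{-A-N_{G}(W)})$ and $g(W) = \sigma(G_{-A-B-N_{G}(W)})$, the empty set contributes exactly $f(\emptyset) = \sigma(G_{-A})$ and $g(\emptyset) = \sigma(G_{-A-B})$, so that after expanding the two products the $W = \emptyset$ contributions cancel and one is left with $\Delta(G, A, B) = \sum_{W \neq \emptyset} \bigl( f(\emptyset) g(W) - g(\emptyset) f(W) \bigr)$, the sum over non-empty independent $W \subseteq A$. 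Finally I would identify each summand: since the vertices of $A$ --- and in particular those of $N_{G}(W)$ lying in $A$ --- are already deleted in $G_{-A}$, a direct comparison of the relevant vertex-deleted subgraphs gives $g(\emptyset) f(W) - f(\emptyset) g(W) = \Delta(G_{-A}, N_{G}(W), B)$, so that $\Delta(G, A, B) = - \sum_{W \neq \emptyset} \Delta(G_{-A}, N_{G}(W), B)$. The lemma as stated then follows by observing that the omitted term is $\Delta(G_{-A}, \emptyset, B) = \sigma(G_{-A}) \sigma(G_{-A-B}) - \sigma(G_{-A}) \sigma(G_{-A-B}) = 0$, so it may be re-inserted into the sum without changing its value.

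The algebra involved is entirely routine; the only point that genuinely needs care --- and therefore the main, if mild, obstacle --- is the bookkeeping of vertex deletions: verifying that independence of subsets of $A$ is insensitive to the deletion of $B$, and that $N_{G}(W)$ may be used unchanged as a deletion set inside $G_{-A}$ even though it can intersect $A$ or $B$, since the vertices in question are removed in either case.
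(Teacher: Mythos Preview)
Your proposal is correct and follows essentially the same route as the paper: expand $\sigma(G)$ and $\sigma(G_{-B})$ via Theorem~\ref{theo:sigma_prop_5} with $U=A$, combine, and recognise each summand as $-\Delta(G_{-A}, N_G(W), B)$. The only cosmetic difference is that the paper keeps the $W=\emptyset$ term in the sum throughout (it vanishes identically), whereas you split it off and re-insert it at the end; the bookkeeping about $N_{G_{-B}}(W)$ versus $N_G(W)$ is handled in both proofs in the same way.
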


\begin{proof}
Applying the recurrence relation for vertex subsets (Theorem \ref{theo:sigma_prop_5}) we obtain
\begin{align*}
\Delta(G, A, B) 
&= \sigma(G_{-A}) \cdot \sigma(G_{-B}) - \sigma(G) \cdot \sigma(G_{-A-B}) \\
&= \sigma(G_{-A}) \cdot \sum_{\mathclap{\text{ind. } W \subseteq A }}{\sigma(G_{-B-A-N_{G_{-B}}(W)})} - \sum_{\mathclap{\text{ind. } W \subseteq A}}{\sigma(G_{-A-N_{G}(W)})} \cdot \sigma(G_{-A-B}) \\
&= \phantom{-} \sum_{\mathclap{\text{ind. } W \subseteq A}}{\left[ \sigma(G_{-A}) \cdot \sigma(G_{-B-A-N_{G_{-B}}(W)}) - \sigma(G_{-A-N_{G}(W)}) \cdot \sigma(G_{-A-B}) \right]}.
\end{align*}
As $A$ and $B$ are disjoint, for all $W \subseteq A$ we have $B \cup N_{G_{-B}}(W) = B \cup N_{G}(W)$ and consequently $G_{-B-A-N_{G_{-B}}(W)} = G_{-B-A-N_{G}(W)}$. Applying this, the statement follows:
\begin{align*}
\Delta(G, A, B)
&= \phantom{-} \sum_{\mathclap{\text{ind. } W \subseteq A}}{\left[\sigma(G_{-A}) \cdot \sigma(G_{-B-A-N_{G}(W)}) - \sigma(G_{-A-N_{G}(W)}) \cdot \sigma(G_{-A-B})\right]} \\
&= - \sum_{\mathclap{\text{ind. } W \subseteq A}}{\left[ \sigma(G_{-A-N_{G}(W)}) \cdot \sigma(G_{-A-B}) - \sigma(G_{-A}) \cdot \sigma(G_{-A-N_{G}(W)-B}) \right]} \\
&= - \sum_{\mathclap{\text{ind. } W \subseteq A}}{\Delta(G_{-A}, N_{G}(W), B)}.  \qedhere
\end{align*}
\end{proof}

Let $G^A$, $G^B$, $G^{AB}$ and $G^*$ denote the union of those connected components of $GH$ including vertices from $A$, from $B$, from $A$ and $B$, and from neither of both, respectively. If there are no connected components which include vertices from both vertex subsets $A$ and $B$, that means $G = G^A \cdotcup G^B \cdotcup G^*$ and $G^{AB} = \emptyset$, then the terms in $\Delta(G, A, B)$ cancel each other.

\begin{prop}[Corollay 5 in \cite{trinks2013}] \label{prop:delta_infinity}
Let $G = (V, E)$ be a simple graph and $A, B \subseteq V$ two vertex subsets, such that $G = G^A \cdotcup G^B \cdotcup G^*$. Then
\begin{align}
\Delta(G, A, B) = 0.
\end{align}
\end{prop}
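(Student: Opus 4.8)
The plan is to avoid the recurrence of Lemma~\ref{lemm:delta_neighborhood} entirely and argue directly from the multiplicativity of $\sigma$ over connected components, property~\eqref{eq:sigma_prop_4}; this is precisely the cancellation alluded to in the paragraph preceding the statement. Write $G = G^A \cdotcup G^B \cdotcup G^*$ as in the hypothesis. The first step is to record what happens to the vertex-deleted graphs: since every vertex of $A$ lies in $G^A$ and no component of $G^B$ or $G^*$ meets $A$ (and symmetrically for $B$), deleting $A$ only affects the part $G^A$ and deleting $B$ only affects the part $G^B$. Hence
\begin{align*}
G_{-A} &= (G^A)_{-A} \cdotcup G^B \cdotcup G^*, \\
G_{-B} &= G^A \cdotcup (G^B)_{-B} \cdotcup G^*, \\
G_{-A-B} &= (G^A)_{-A} \cdotcup (G^B)_{-B} \cdotcup G^*.
\end{align*}

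The second step is to apply~\eqref{eq:sigma_prop_4} to all four graphs involved. Writing $a = \sigma(G^A)$, $a' = \sigma((G^A)_{-A})$, $b = \sigma(G^B)$, $b' = \sigma((G^B)_{-B})$, and $c = \sigma(G^*)$, this yields $\sigma(G) = abc$, $\sigma(G_{-A}) = a'bc$, $\sigma(G_{-B}) = ab'c$, and $\sigma(G_{-A-B}) = a'b'c$. Substituting into the definition of $\Delta$ gives $\Delta(G, A, B) = (a'bc)(ab'c) - (abc)(a'b'c) = 0$, which is exactly the claim.

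There is essentially no obstacle: once the decomposition of the vertex-deleted graphs is written down, the identity collapses to a one-line cancellation. The only point worth a careful sentence is the claim that deleting $A$ leaves the components in $G^B$ and $G^*$ untouched; this is immediate from the hypothesis, since by definition any component of $G$ meeting $A$ is part of $G^A$, and $G^A$, $G^B$, $G^*$ are pairwise vertex-disjoint (the degenerate case $A = \emptyset$ or $B = \emptyset$, where the corresponding part is the empty graph with $\sigma = 1$, is covered as well). If one instead wanted a proof through Lemma~\ref{lemm:delta_neighborhood}, one could peel $A$ off term by term and check that the hypothesis is preserved under passing to $G_{-A}$, but the direct computation is shorter.
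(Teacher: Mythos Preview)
Your proof is correct and follows essentially the same approach as the paper: both expand each vertex-deleted graph over the disjoint decomposition $G = G^A \cdotcup G^B \cdotcup G^*$, apply the multiplicativity of $\sigma$ from \eqref{eq:sigma_prop_4}, and observe that the two products cancel. The only cosmetic difference is that you introduce the abbreviations $a, a', b, b', c$, whereas the paper writes out all the $\sigma$-factors explicitly.
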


\begin{proof}
The vertices of $A$ and $B$ can only be deleted in $G^A$ and $G^B$, respectively. Thus, the statement follows via
\begin{align*}
\Delta(G, A, B) 
&= \sigma(G_{-A}) \cdot \sigma(G_{-B}) - \sigma(G) \cdot \sigma(G_{-A-B}) \\
&= \sigma((G^A \cdotcup G^B \cdotcup G^*)_{-A}) \cdot \sigma((G^A \cdotcup G^B \cdotcup G^*)_{-B}) \\
& \eqspace - \sigma(G^A \cdotcup G^B \cdotcup G^*) \cdot \sigma((G^A \cdotcup G^B \cdotcup G^*)_{-A-B}) \\
&= \sigma(G^A_{-A} \cdotcup G^B \cdotcup G^*) \cdot \sigma(G^A \cdotcup G^B_{-B} \cdotcup G^*) \\
& \eqspace - \sigma(G^A \cdotcup G^B \cdotcup G^*) \cdot \sigma(G^A_{-A} \cdotcup G^B_{-B} \cdotcup G^*) \\
&= \sigma(G^A_{-A}) \cdot \sigma(G^B) \cdot \sigma(G^*) \cdot \sigma(G^A) \cdot \sigma(G^B_{-B}) \cdot \sigma(G^*) \\
& \eqspace - \sigma(G^A) \cdot \sigma(G^B) \cdot \sigma(G^*) \cdot \sigma(G^A_{-A}) \cdot \sigma(G^B_{-B}) \cdot \sigma(G^*) \\
&= 0. \qedhere
\end{align*}
\end{proof}

\begin{prop}
\label{prop:delta_non-disjoint}
Let $G = (V, E)$ be a simple graph and $A, B \subseteq V$ two vertex subsets, such that $A \cap B = C \neq \emptyset$. Then
\begin{align}
\Delta(G, A, B) < \Delta(G_{-C}, A \setminus C, B \setminus C).
\end{align}
\end{prop}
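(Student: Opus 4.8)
The plan is to compare $\Delta(G, A, B)$ with $\Delta(G_{-C}, A \setminus C, B \setminus C)$ directly from the definition, exploiting that three of the four $\sigma$-values involved are literally the same. Write $A' = A \setminus C$ and $B' = B \setminus C$. Since $C \subseteq A$ and $C \subseteq B$, the sets $C$, $A'$ and $B'$ are pairwise disjoint (indeed $A' \cap B' = (A \cap B) \setminus C = \emptyset$), with $A = C \cup A'$, $B = C \cup B'$ and $A \cup B = C \cup A' \cup B'$. Consequently $(G_{-C})_{-A'} = G_{-A}$, $(G_{-C})_{-B'} = G_{-B}$ and $(G_{-C})_{-A'-B'} = G_{-A-B}$, so that
\[
\Delta(G_{-C}, A', B') = \sigma(G_{-A}) \cdot \sigma(G_{-B}) - \sigma(G_{-C}) \cdot \sigma(G_{-A-B}).
\]

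Subtracting the definition of $\Delta(G, A, B)$ from this, every term cancels except the ones carrying the factor $\sigma(G_{-A-B})$, and I obtain
\[
\Delta(G_{-C}, A', B') - \Delta(G, A, B) = \bigl( \sigma(G) - \sigma(G_{-C}) \bigr) \cdot \sigma(G_{-A-B}).
\]
It then remains to show that the right-hand side is strictly positive. The factor $\sigma(G_{-A-B})$ is at least $1$, since the empty set is independent in every graph. For the other factor, every independent set of $G_{-C}$ is (as a vertex subset) an independent set of $G$, so $\sigma(G) \geq \sigma(G_{-C})$; and since $C \neq \emptyset$, choosing $c \in C$ yields the independent set $\{c\}$ of $G$, which is not a vertex subset of $G_{-C}$, hence $\sigma(G) - \sigma(G_{-C}) \geq 1$. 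Therefore the difference above is at least $1$, which is exactly the asserted strict inequality $\Delta(G, A, B) < \Delta(G_{-C}, A', B')$.

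I do not expect a genuine obstacle here: the statement reduces to the observation that replacing $G$ by $G_{-C}$ changes only one of the four $\sigma$-values appearing in $\Delta$, together with the elementary facts that $\sigma \geq 1$ always and that deleting a nonempty vertex set strictly decreases $\sigma$. The only points requiring a little care are the set-theoretic bookkeeping of $C$, $A \setminus C$ and $B \setminus C$, and the fact that the inequality is strict rather than merely weak — which is precisely where the hypothesis $C \neq \emptyset$ enters.
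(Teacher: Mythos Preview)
Your proof is correct and follows essentially the same structure as the paper's: both arguments reduce the inequality to
\[
\Delta(G_{-C}, A \setminus C, B \setminus C) - \Delta(G, A, B) = \bigl(\sigma(G) - \sigma(G_{-C})\bigr)\cdot \sigma(G_{-A-B}) > 0,
\]
using the same set-theoretic identifications $G_{-A} = (G_{-C})_{-A'}$, etc. The only difference is in how the strict inequality $\sigma(G) > \sigma(G_{-C})$ is justified: the paper expands $\sigma(G)$ via the recurrence of Theorem~\ref{theo:sigma_prop_5} and splits off the $W=\emptyset$ term, leaving a nonempty sum of positive terms, whereas you argue directly by injecting independent sets of $G_{-C}$ into those of $G$ and exhibiting $\{c\}$ as an extra one. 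Your version is slightly more elementary and self-contained; the paper's version keeps the argument uniform with the surrounding machinery.
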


\begin{proof}
The statement follows by applying the recurrence relation for vertex subsets (Theorem \ref{theo:sigma_prop_5}):
\begin{align*}
\Delta(G, A, B) &= \sigma(G_{-A}) \cdot \sigma(G_{-B}) - \sigma(G) \cdot \sigma(G_{-A-B}) \\
&= \sigma(G_{-A}) \cdot \sigma(G_{-B}) - \sum_{\mathclap{\substack{W \subseteq C \\ W \text{ is independent}}}}{\sigma(G_{-C-N_{G}(W)})} \cdot \sigma(G_{-A-B}) \\
&= \sigma(G_{-A}) \cdot \sigma(G_{-B}) - \sigma(G_{-C}) \cdot \sigma(G_{-A-B}) \\
& \eqspace - \sum_{\mathclap{\substack{\emptyset \subset W \subseteq C \\ W \text{ is independent}}}}{\sigma(G_{-C-N_{G}(W)})} \cdot \sigma(G_{-A-B}) \\
& < \sigma(G_{-A}) \cdot \sigma(G_{-B}) - \sigma(G_{-C}) \cdot \sigma(G_{-A-B}) \\
&= \sigma(G_{-C-(A \setminus C)}) \cdot \sigma(G_{-C-(B \setminus C)}) \\
& \eqspace - \sigma(G_{-C}) \cdot \sigma(G_{-C-(A \setminus C) - (B \setminus C)}) \\
&= \Delta(G_{-C}, A \setminus C, B \setminus C). \qedhere
\end{align*}
\end{proof}

In order to generalize the notion of distance between a pair of vertices to distance between two vertex subsets, the set of chord-free paths connecting vertices of the two vertex subsets are considered.

\begin{defi}
\label{defi:induced_A_B_path}
Let $G = (V, E)$ be a graph and $A, B \subseteq V$ two vertex subsets. A path $P = (v_1, \ldots, v_k)$ of $G$ is an \emph{induced $A$-$B$-path}, if $V(P) \cap A = \{v_1\}$ and $V(P) \cap B = \{v_k\}$, where $V(P)$ is the set of vertices of $P$, and $\{v_i, v_j\} \in E \Leftrightarrow \abs{i - j} = 1$. By $P_i(G, A, B)$ we denote the \emph{set of all induced $A$-$B$-paths} in $G$. The \emph{length} of an induced $A$-$B$-path $P$ is the number of edges in $P$, that means $|V(P)| - 1$. 
\end{defi}

\begin{defi}
Let $G = (V, E)$ be a graph, $A, B \subseteq V$ two disjoint vertex subsets. We say $P_i(G, A, B)$ is \emph{even} (\emph{odd}) if the length of each path $P \in P_i(G, A, B)$ is even (odd) and $P_i(G, A, B)$ is \emph{infinite}, if there is no induced $A$-$B$-path in $G$ (the length of each $P \in P_i(G, A, B)$ is infinite).
\end{defi}

\begin{lemm}[Lemma 6 in \cite{trinks2013}]
\label{lemm:gmsc_prop}
Let $G = (V, E)$ be a graph, $A, B \subseteq V$ two disjoint vertex subsets and $W \subseteq A$ a subset of $A$. If $P_i(G, A, B)$ is even (odd), then $P_i(G_{-A}, N_{G}(W), B)$ is odd (even) or infinite. There is at least one vertex subset $W \subseteq A$, such that $P_i(G_{-A}, N_{G}(W), B)$ is not infinite and hence odd (even), namely $W = \{a\}$ where $a \in A$ is connected by an induced $A$-$B$-path  in $G$ to a vertex $b \in B$.
\end{lemm}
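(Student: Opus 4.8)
The plan is to set up a one-vertex length shift between the two families of induced paths: an induced $N_G(W)$-$B$-path in $G_{-A}$, extended by one suitable vertex of $W$ at the front, becomes an induced $A$-$B$-path in $G$ that is one edge longer, and conversely an induced $A$-$B$-path in $G$, with its first vertex (which lies in $A$) deleted, becomes an induced $N_G(\{a\})$-$B$-path in $G_{-A}$ that is one edge shorter. The parity flip is then immediate, and the subset $W = \{a\}$ required by the second assertion is exactly what the reverse direction produces.

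For the parity statement, fix $W \subseteq A$ and suppose $P_i(G_{-A}, N_G(W), B)$ is not infinite, otherwise there is nothing to prove. Take $Q = (u_1, \ldots, u_k) \in P_i(G_{-A}, N_G(W), B)$ and choose $a \in W$ with $\{a, u_1\} \in E$, which exists since $u_1 \in N_G(W)$. Because $Q$ is a path of $G_{-A}$, none of the $u_i$ lies in $A$, so $a$ is distinct from every $u_i$ and $P := (a, u_1, \ldots, u_k)$ is a path of $G$. I would then check that $P \in P_i(G, A, B)$: we have $V(P) \cap A = \{a\}$; since $A \cap B = \emptyset$ and $V(Q) \cap B = \{u_k\}$ also $V(P) \cap B = \{u_k\}$; consecutive vertices are adjacent by construction; and $P$ has no chord, because a chord at $a$ would be an edge $\{a, u_j\}$ with $j \ge 2$, forcing $u_j \in N_G(W)$ against $V(Q) \cap N_G(W) = \{u_1\}$, while a chord among the $u_i$ contradicts that $Q$ is induced. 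Hence the length of $P$ equals the length of $Q$ plus one; if $P_i(G, A, B)$ is even then every such $P$ has even length, so every such $Q$ has odd length, i.e. $P_i(G_{-A}, N_G(W), B)$ is odd, and symmetrically in the odd case.

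For the existence claim, note that calling $P_i(G, A, B)$ even or odd presupposes that it is not infinite, so fix an induced $A$-$B$-path $P = (v_1, \ldots, v_k)$ in $G$; since $A \cap B = \emptyset$ we have $k \ge 2$. Put $a := v_1 \in A$, $W := \{a\}$ and $Q := (v_2, \ldots, v_k)$. The vertices $v_2, \ldots, v_k$ avoid $A$, so $Q$ is a path of $G_{-A}$; $v_2 \in N_G(a) = N_G(W)$; no $v_j$ with $j \ge 3$ lies in $N_G(W)$, since $\{a, v_j\} \in E$ would be a chord of the induced path $P$; $V(Q) \cap B = \{v_k\}$; and $Q$ is induced because $P$ is. Thus $Q \in P_i(G_{-A}, N_G(\{a\}), B)$, so this set is not infinite, and applying the parity statement with $W = \{a\}$ shows it is odd (respectively even). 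Finally $a$ is joined to $v_k \in B$ by the induced $A$-$B$-path $P$, the witnessing path demanded in the statement.

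The whole argument is bookkeeping; the one point that needs care — and the only place the hypotheses are used — is that the pasting and deletion operations really preserve the ``induced'' property, which rests on $Q$ meeting $N_G(W)$ only in its first vertex (forward direction) and on $P$ being chordless (reverse direction). Nothing beyond Definition \ref{defi:induced_A_B_path} and the even/odd/infinite terminology for induced-path sets should be needed.
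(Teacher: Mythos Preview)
Your proof is correct and follows essentially the same approach as the paper: extend an induced $N_G(W)$-$B$-path in $G_{-A}$ by a single vertex $a \in W$ to obtain an induced $A$-$B$-path in $G$ of opposite parity, and conversely strip the first vertex of an induced $A$-$B$-path to witness non-infiniteness for $W=\{a\}$. The paper phrases the first part as a contradiction and is terser in the second, but the construction and the one nontrivial check---that the only-vertex-in-$N_G(W)$ condition rules out chords through $a$---are identical to yours.
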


\begin{proof}
The first part is shown by contradiction. Assume $P_i(G, A, B)$ is even (odd) and for a subset $W \subseteq A$ there is an even (odd) induced $N_{G}(W)$-$B$-path in $G_{-A}$, connecting a vertex $x \in N_{G}(W)$ with a vertex $b \in B$. Because $x \in N_{G}(W)$, there is a vertex $a \in W \subseteq A$, such that $a$ and $x$ are adjacent. As $x$ is the only vertex of the path in $N_{G}(W)$ by definition, $a$ is non-adjacent to all other of its vertices. Hence, the path from $a$ to $x$ to $b$ in $G$ is induced and has odd (even) length, which contradicts the assumption of the statement.

As $P_i(G, A, B)$ is even (odd), there is at least one induced $A$-$B$-path $P$ in $G$. Thus, there is a vertex $a \in A$ connected by an induced $A$-$B$-path to a vertex $b \in B$. Consequently, there is a induced $N_{G}(a)$-$B$-path in $G_{-A}$, which proves the second part.
\end{proof}
\section{MSC for parity graphs}
\label{sec:parity_graphs}

\begin{theo}
\label{theo:gmsc_parity}
Let $G = (V, E)$ be a simple graph and $A, B \subseteq V$ two vertex subsets. Then
\begin{align}
\Delta(G, A, B) &= \sigma(G_{-A}) \cdot \sigma(G_{-B}) - \sigma(G) \cdot \sigma(G_{-A-B}) \notag \\
& \begin{cases}
< 0 & \text{ if } P_i(G, A, B) \text{ is even,} \\
= 0 & \text{ if } P_i(G, A, B) \text{ is infinite,} \\
> 0 & \text{ if } P_i(G, A, B) \text{ is odd.}
\end{cases}
\end{align}
\end{theo}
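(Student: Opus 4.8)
The plan is to prove the statement by strong induction on the number of vertices of $G$, reducing everything to the case where $A$ and $B$ are disjoint. The main ingredients are the recurrence of Lemma \ref{lemm:delta_neighborhood}, the parity-tracking Lemma \ref{lemm:gmsc_prop}, and Propositions \ref{prop:delta_infinity} and \ref{prop:delta_non-disjoint} for the degenerate cases; with these in hand the proof is mainly a matter of assembling them along the induction.

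First I would dispose of the non-disjoint case. If $C := A \cap B \neq \emptyset$, then each vertex of $C$ is by itself an induced $A$-$B$-path of length $0$, so $P_i(G, A, B)$ is nonempty and contains an even path; hence the only possibility among the three cases is that $P_i(G, A, B)$ is even. Since $G_{-C}$ is an induced subgraph of $G$ not meeting $C$, every odd induced $(A \setminus C)$-$(B \setminus C)$-path in $G_{-C}$ would also be an odd induced $A$-$B$-path in $G$, which is excluded; therefore $P_i(G_{-C}, A \setminus C, B \setminus C)$ is even or infinite, and the induction hypothesis (applicable as $G_{-C}$ has fewer vertices than $G$) gives $\Delta(G_{-C}, A \setminus C, B \setminus C) \le 0$. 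Combining this with the strict inequality of Proposition \ref{prop:delta_non-disjoint} yields $\Delta(G, A, B) < 0$, as required.

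Now suppose $A \cap B = \emptyset$. If $P_i(G, A, B)$ is infinite, so that no induced $A$-$B$-path exists, I would first check that then no component of $G$ contains vertices of both $A$ and $B$: on any $A$-$B$-path, the segment from its last $A$-vertex to the first subsequent $B$-vertex has interior disjoint from $A \cup B$, and a shortest such segment is chordless, hence an induced $A$-$B$-path, a contradiction. Thus $G = G^A \cdotcup G^B \cdotcup G^*$ and $\Delta(G, A, B) = 0$ by Proposition \ref{prop:delta_infinity}; this also covers the base case of the empty graph. Otherwise there is an induced $A$-$B$-path, so $A \neq \emptyset$, $G_{-A}$ has fewer vertices than $G$, and Lemma \ref{lemm:delta_neighborhood} writes $\Delta(G, A, B)$ as $-\sum_{W}\Delta(G_{-A}, N_G(W), B)$ over the independent $W \subseteq A$. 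If $P_i(G, A, B)$ is odd, Lemma \ref{lemm:gmsc_prop} makes each $P_i(G_{-A}, N_G(W), B)$ even or infinite, so by induction every summand is $\le 0$; the lemma also supplies some $a \in A$ for which $P_i(G_{-A}, N_G(\{a\}), B)$ is not infinite, hence even, so that summand is strictly negative and $\Delta(G, A, B) > 0$. The case $P_i(G, A, B)$ even is the mirror image: each $P_i(G_{-A}, N_G(W), B)$ is odd or infinite, every summand is $\ge 0$, at least one is strictly positive, and $\Delta(G, A, B) < 0$.

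Because all of the analytic content is packaged into the cited lemmas and propositions, the delicate part is the bookkeeping: checking that the induction is well founded (each step removes the nonempty set $A$ or the nonempty set $C$, or else terminates through Proposition \ref{prop:delta_infinity}), noting that the term $W = \emptyset$ contributes only $0$, and --- the subtlest point --- verifying that the parity hypothesis genuinely descends to the smaller instances, both in the passage from $G$ to $G_{-C}$ and when invoking Lemma \ref{lemm:gmsc_prop} inside $G_{-A}$, where $N_G(W)$ and $B$ need not be disjoint although the lemma only asks $A$ and $B$ to be disjoint in $G$. I expect this transfer of the parity condition, rather than any of the inequalities, to be the main obstacle.
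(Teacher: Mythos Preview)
Your proposal is correct and follows essentially the same route as the paper: strong induction on $|V(G)|$, with Proposition~\ref{prop:delta_infinity} handling the infinite case, Proposition~\ref{prop:delta_non-disjoint} reducing the non-disjoint case, and Lemmas~\ref{lemm:delta_neighborhood} and~\ref{lemm:gmsc_prop} driving the disjoint inductive step. The only cosmetic difference is that the paper isolates explicit base cases at $n=1$ and $n=2$, whereas you let the infinite case (including the empty graph) absorb the bottom of the induction; both work, and the subtlety you flag about $N_G(W)\cap B$ possibly being nonempty is harmless since the induction hypothesis already covers non-disjoint pairs.
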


\begin{proof}
If $P_i(G, A, B)$ is infinite, then there are no connected components including vertices from both vertex subsets $A$ and $B$. Thus, this case is stated in Proposition \ref{prop:delta_infinity}. Therefore, from now on we assume that $P_i(G, A, B)$ is not infinite, that means there is at least one vertex $a \in A$ and at least one vertex $b \in B$ connected by a path. 

We prove the two cases $P_i(G, A, B)$ is even and $P_i(G, A, B)$ is odd by induction with respect to the number of vertices in $G$, denoted by $n(G)$. 

For the basic step we assume a graph $G$ with the minimal number of vertices, this is $n(G) = 1$ if $P_i(G, A, B)$ is even and $n(G) = 2$ if $P_i(G, A, B)$ is odd. For $P_i(G, A, B)$ is even and $n(G) = 1$ we have $G = (\{a\}, \emptyset)$ and $A = B = \{a\}$. Hence
\begin{align*}
\Delta(G, A, B) &= \sigma(G_{-A}) \cdot \sigma(G_{-B}) - \sigma(G) \cdot \sigma(G_{-A-B}) = -1 < 0.
\end{align*}
For $P_i(G, A, B)$ is odd and $n(G) =  2$ we have $G = (\{a,b\}, \{\{a,b\}\})$ and $A = \{a\}$, $B = \{b\}$. Hence
\begin{align*}
\Delta(G, A, B) &= \sigma(G_{-A}) \cdot \sigma(G_{-B}) - \sigma(G) \cdot \sigma(G_{-A-B}) = 1 > 0.
\end{align*}

We assume as induction hypothesis that the statement holds for any graph with at most $k$ vertices and consider from now on a graph $G$ with $n(G) = k+1$ vertices. 

If $A$ and $B$ are not disjoint, that means $A \cap B = C \neq \emptyset$, which means that $P_i(G, A, B)$ is even, then by Proposition \ref{prop:delta_non-disjoint} we have
\begin{align*}
\Delta(G, A, B) < \Delta(G_{-C}, A \setminus C, B \setminus C).
\end{align*}
As $C$ is non-empty, $G_{-C}$ has at most $k$ vertices and hence we can use the induction hypothesis. Furthermore, as $P_i(G, A, B)$ is even, $P_i(G_{-C}, A \setminus C, B \setminus C)$ is also even or infinite (by deleting $C$, no new paths occur, but some are destroyed), that means
\begin{align*}
\Delta(G, A, B) < \Delta(G_{-C}, A \setminus C, B \setminus C) \leq 0.
\end{align*}

Otherwise, if $A$ and $B$ are disjoint, we can apply Lemma \ref{lemm:delta_neighborhood}:
\begin{align*}
\Delta(G, A, B) 
&= - \sum_{\mathclap{\text{ind. } W \subseteq A}}{\Delta(G_{-A}, N_{G}(W), B)}.
\end{align*}
$A$ is non-empty (otherwise $P(G, A, B)$ would be infinite), therefore $G_{-A}$ has at most $k$ vertices and the induction hypothesis can be applied: For all $W \subseteq A$ we have
\begin{align*}
\Delta(G_{-A}, N_{G}(W), B)
& \begin{cases}
\geq 0 & \text{ if } P_i(G, A, B) \text{ is even,} \\
\leq 0 & \text{ if } P_i(G, A, B) \text{ is odd,}
\end{cases}
\end{align*}
because if $P_i(G, A, B)$ is even (odd), then $P_i(G_{-A}, N_{G}(W), B)$ is not even (odd) by Lemma \ref{lemm:gmsc_prop}. But at least for $W = \{a\} \subseteq A$ we have
\begin{align*}
\Delta(G_{-A}, N_{G}(W), B)
& \begin{cases}
> 0 & \text{ if } P(G, A, B) \text{ is even,} \\
< 0 & \text{ if } P(G, A, B) \text{ is odd,}
\end{cases}
\end{align*}
again by Lemma \ref{lemm:gmsc_prop}. Hence, we get the other two cases of the statement:
\begin{align*}
\Delta(G, A, B)
& \begin{cases}
< 0 & \text{ if } P(G, A, B) \text{ is even,} \\
> 0 & \text{ if } P(G, A, B) \text{ is odd.} 
\end{cases} \qedhere
\end{align*}
\end{proof}

\begin{defi}
A simple graph $G = (V, E)$ is a \emph{parity graph}, if for any two vertices $u, v \in V$ the length of all induced $u$-$v$-paths in $G$ has the same parity.
\end{defi}

Parity graphs are a generalization of bipartite graphs, because only the length of all induced $u$-$v$-path is claimed to have the same parity, instead of all $u$-$v$-path as for bipartite graphs.

If two vertices have even (odd) distance in a parity graph, then all induced paths have even (odd) length and hence the previous theorem proves the MSC (Conjecture \ref{conj:msc}) for parity graphs (and arbitrary vertices).
\begin{coro}
\label{coro:msc_parity_graph}
The Merrifield-Simmons conjecture holds for parity graphs.
\end{coro}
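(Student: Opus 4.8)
The plan is to specialize Theorem~\ref{theo:gmsc_parity} to the singleton vertex subsets $A = \{u\}$ and $B = \{v\}$. For this choice one has $\Delta(G, A, B) = \Delta(G, u, v)$, and $P_i(G, \{u\}, \{v\})$ is exactly the set of induced $u$-$v$-paths in $G$. The argument then amounts to translating the trichotomy in Theorem~\ref{theo:gmsc_parity} (even / infinite / odd) into a statement about $d(G, u, v)$, and it splits according to whether $u$ and $v$ lie in a common connected component of $G$.

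If $u$ and $v$ lie in different components, then there is no $u$-$v$-path at all, so $P_i(G, \{u\}, \{v\})$ is infinite and Theorem~\ref{theo:gmsc_parity} yields $\Delta(G, u, v) = 0$; in this case $d(G, u, v) = \infty$, so \eqref{eq:conj_msc} is understood to hold trivially (both sides being read as vanishing). Hence from now on assume $u$ and $v$ lie in the same component, so that at least one induced $u$-$v$-path exists. Since $G$ is a parity graph, all induced $u$-$v$-paths have the same parity, i.e.\ $P_i(G, \{u\}, \{v\})$ is either even or odd. The key step — essentially the only thing requiring a comment — is to identify this common parity with the parity of $d(G, u, v)$: a shortest $u$-$v$-path is necessarily induced (a chord would produce a strictly shorter path), so such a path lies in $P_i(G, \{u\}, \{v\})$, and its length $d(G, u, v)$ therefore carries the common parity of the whole set.

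It remains to combine this with Theorem~\ref{theo:gmsc_parity}. If $d(G, u, v)$ is even, then $P_i(G, \{u\}, \{v\})$ is even, so $\Delta(G, u, v) < 0$ and $\sgn(\Delta(G, u, v)) = -1 = (-1)^{d(G, u, v) + 1}$; if $d(G, u, v)$ is odd, then $P_i(G, \{u\}, \{v\})$ is odd, so $\Delta(G, u, v) > 0$ and $\sgn(\Delta(G, u, v)) = +1 = (-1)^{d(G, u, v) + 1}$. In either case \eqref{eq:conj_msc} holds, which proves the corollary. I do not expect a genuine obstacle here: all the substantive work is already contained in Theorem~\ref{theo:gmsc_parity}, and the corollary is just its reformulation from induced-path parity into distance parity. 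The only points demanding (minor) care are the remark that a shortest path is induced — which is what lets the parity-graph hypothesis pin down $\sgn(\Delta(G, u, v))$ — and the boundary case of vertices in distinct components, where the statement is read as trivial.
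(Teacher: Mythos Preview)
Your argument is correct and follows essentially the same route as the paper: specialize Theorem~\ref{theo:gmsc_parity} to $A=\{u\}$, $B=\{v\}$ and use the parity-graph hypothesis (plus the observation that a shortest path is induced) to match the parity of $P_i(G,\{u\},\{v\})$ with that of $d(G,u,v)$. The paper compresses this into a single sentence before the corollary, whereas you spell out the disconnected case and the shortest-path-is-induced remark explicitly, but the underlying reasoning is identical.
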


In relation to the corollary above, Theorem \ref{theo:gmsc_parity} is slightly more general, because there, only assumptions about the subgraph connecting the vertex subsets are made: The MSC holds in a graph $G = (V, E)$ for vertex subsets $A, B \subseteq V$, if the subgraph induced by all vertices in some $A$-$B$-path is a parity graph.
\section{Counterexamples}
\label{sec:counterexamples}

Having shown in the preceding section that the Merrifield-Simmons conjecture (MSC) not only holds in bipartite graphs, but also holds in parity graphs, the question arises if it can be further generalized to larger graph classes.

It seems that this is not possible, because of the graphs displayed in Figure \ref{fig:counterexample_msc}, where $G_1$ is the minimal counterexample for the MSC conjecture in arbitrary graphs.

\begin{figure}
\begin{center}
	\includegraphics{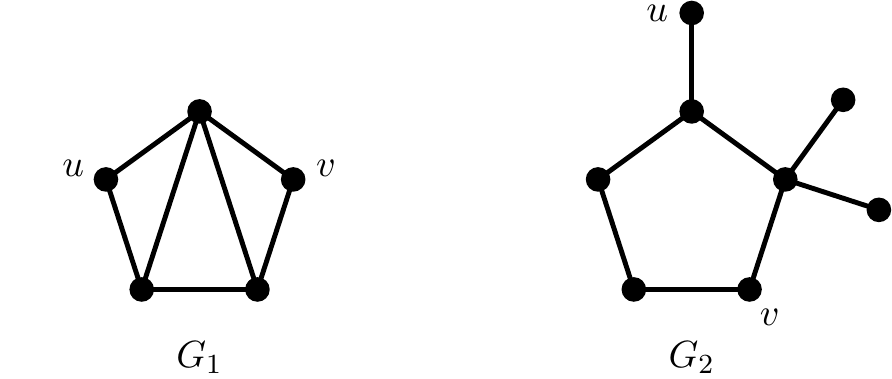}
\end{center}
\caption{Graphs $G_1$ and $G_2$, which are counterexamples for the MSC in superclasses of parity graphs. It holds $\Delta(G_1, u, v) = 6 \cdot 6 - 9 \cdot 4 = 0$ and $\Delta(G_2, u, v) = 21 \cdot 21 - 29 \cdot 15 = 6$.}
\label{fig:counterexample_msc}
\end{figure}

According to \textcite{isgci}, the following are the minimal superclasses of parity graphs: (5,2)-odd-chordal (equivalent to Meyniel, (odd building,odd-hole)-free, and very strongly perfect), $P_4$-bipartite, ($X_{38}$,gem,house)-free, preperfect, and skeletal.

\begin{rema}
The graphs $G_1$ and $G_2$ in Figure \ref{fig:counterexample_msc} provide counterexamples for the MSC conjecture. $G_1$ is a (5,2)-odd-chordal, $P_4$-bipartite, preperfect and skeletal graph, and $G_2$ is a ($X_{38}$,gem,house)-free graph. Consequently, the MSC cannot be generalized to any of the minimal superclasses of parity graphs listed by \textcite{isgci}.
\end{rema}

\printbibliography

\end{document}